\newtheorem{theorem}{Theorem}
\newtheorem{proposition}[theorem]{Proposition}
\newtheorem{lemma}{Lemma}
\renewcommand{\epsilon}{\varepsilon}
\DeclareMathOperator{\Ima}{Im}
\DeclareMathOperator{\Ker}{Ker}
\def\Id{\text{\rm Id}}
\def\cA{\EuScript{A}}
\def\N{\mathbb{N}}
\def\R{\mathbb{R}}
\begin{document}

\title[A version of the theorem of Johnson]{A version of the theorem of Johnson, Palmer and Sell for quasicompact cocycles}

\begin{abstract}
The well-known theorem of Johnson, Palmer and Sell asserts that the endpoints of the Sacker--Sell spectrum of a given cocycle of invertible matrices over a topological dynamical system 
$(M, f)$ are realized as Lyapunov exponents with respect to some ergodic invariant probability measure for $f$. In this note we establish the version of this result for quasicompact cocycles
 of operators acting on an arbitrary Banach space.
\end{abstract}

\begin{thanks}
{ D.D. was supported in part by an Australian Research Council Discovery Project DP150100017 and by  Croatian Science Foundation under the project IP-2014-09-2285}
\end{thanks}

\author{Davor Dragi\v cevi\'c}
\address{Department of Mathematics, University of Rijeka, Croatia}
\email{ddragicevic@math.uniri.hr}

\keywords{Sacker-Sell spectrum, Lyapunov exponents, invariant measures}
\subjclass[2010]{Primary: 37C40, 37C60.}

\maketitle
\section{Introduction}
In their landmark paper~\cite{SS}, Sacker and Sell introduced the notion of (what is now called) the
Sacker--Sell spectrum for cocycles over topological dynamical systems and they described
all possible structures of the spectrum. Furthermore, they have obtained several results which indicated that there exists a strong  connection between their spectral theory
and the theory of Lyapunov exponents which plays a central role in the modern dynamical systems theory. Those developments culminated with the remarkable paper
by Johnson, Palmer and Sell~\cite{JPS}, where the authors proved that the endpoints of the Sacker-Sell spectrum of a given cocycle are realized as Lyapunov exponents of the cocycle
with respect to some ergodic probability measure which is invariant for the base space on which the
cocycle acts.

We emphasize that the results in~\cite{JPS} and~\cite{SS} deal with invertible cocycles on a finite-dimensional space. More recently, Magalh\~aes~\cite{LM} developed the Sacker-Sell spectral
theory for compact cocycles on an arbitrary Banach space and Schreiber~\cite{S} established the version of the theorem of Johnson, Palmer and Sell in this setting under the additional 
assumption that the cocycle is injective. 

In the present paper, we go step further and we develop the Sacker-Sell spectral theory and establish the version of the theorem of  Johnson, Palmer and Sell  for
not  not necessarily injective quasicompact cocycles acting 
on an arbitrary Banach space. The principal motivation for this work are the most recent versions~\cite{PT, FLQ, GTQ, AB} (that build on the earlier work of Ruelle~\cite{R} and Ma\~n\'e~\cite{mane}) of the classical Oseledets multiplicative ergodic theorem~\cite{Osel}
that work under the assumption that the cocycle is quasicompact. These developments culminated in a recent remarkable paper by Blumenthal and Young~\cite{BY} in which the authors have extended
a large part of the modern smooth ergodic theory to the case of maps $f$ acting on arbitrary Banach spaces. The results in~\cite{BY} are valid precisely under the assumption that the derivative
cocycle $Df$ associated to $f$ is quasicompact. 
It is believed that those results will have applications in the study of partial and delay
differential equations. In another direction, above described advancements in the multiplicative ergodic theory have also inspired new directions in the study of statistical properties of random
dynamical systems~\cite{DFGTV}.

The present paper is organized as follows. In Section~\ref{pre}, we introduce notation and recall several concepts and useful results that will be used throughout the paper. In Section~\ref{SacSell} we introduce the notion of a Sacker-Sell
spectrum for quasicompact cocycles and describe all possible structures of the spectrum. 
We note that our results and proofs in this section are 
inspired by the results in~\cite{BDV}, where the authors have  developed  the version of the Sacker-Sell theory with respect to the notion of nonuniform hyperbolicity. 
Finally, in Section~\ref{JPSS} we establish the main result of this paper, i.e. the version of the theorem of Johnson, Palmer and Sell for quasicompact cocycles.
Here  our strategy follows the one we  outlined in~\cite{DD} 
(in the simple case of finite-dimensional dynamics) and consists of using deep results of Cao~\cite{C1} on the growth of subadditive functions over semiflows. 

\section{Preliminaries}\label{pre}
\subsection{Uniformly hyperbolic cocycles}
Throughout this paper  $M$ will  be a compact topological space and  $f\colon M \to M$ will be a homeomorphism. Furthermore, assume that  $X$ is a  Banach space and denote by $B(X)$ the space of all bounded operators
 on $X$. Finally, let $\mathbb N_0=\{0, 1, 2, \ldots \}$.   A  map $\cA \colon M\times \N_0 \to B(X)$ is said to be a  \emph{cocycle} over $f$ if:
\begin{enumerate}
\item $\cA(q,0)=\Id$ for each $q\in M$;
\item $\cA(q,n+m)=\cA(f^n(q),m)\cA(q,n)$ for every $q\in M$ and $n,m \in \N_0$;
\item the map $A\colon M \to B(X)$ defined  by
\begin{equation}\label{gen}
 A(q)=\cA(q, 1), \quad  q\in M
\end{equation}
is continuous. 
\end{enumerate}
We recall that the map  $A$ given by~\eqref{gen} is called the   \emph{generator} of the  cocycle $\cA$.
 We say that a cocycle  $\cA$ over $f$ is \emph{uniformly hyperbolic} if:
\begin{enumerate}
\item there exists a family of projections $P(q)$, $q\in M$ satisfying
\begin{equation}\label{proj}
A(q)P(q)=P(f(q))A(q), \quad q\in M
\end{equation}
such that each map $A(q)\rvert \Ker P(q) \colon \Ker P(q)\to \Ker P(f(q))$ is invertible;
\item  $q\mapsto P(q)$ is a continuous map from $M$ to $B(X)$;
\item there exist $D, \lambda >0$ such that for each $q\in M$ and $n\ge 0$
\begin{equation}\label{UH1}
\lVert \cA(q,n)P(q)\rVert \le De^{-\lambda n}
\end{equation}
and
\begin{equation}\label{UH2}
\lVert \cA(q,-n)(\Id-P(q))\rVert \le De^{-\lambda n},
\end{equation}
where $\cA(q,-n)=(\cA(f^{-n}(q),n)\rvert \Ker P(f^{-n}(q)))^{-1}$.
\end{enumerate}
Let $B_X$ denote the closed unit ball in $X$ centered at $0$.
For an operator $A\in B(X)$, we define $\lVert A\rVert_{ic(X)}$ to be an infimum over all $r>0$ with the property that $A(B_X)$ can be covered by finitely many balls of radius $r$.
It is easy to verify that $\lVert A\rVert_{ic(X)} \le \lVert A\rVert$. The following result will play an important role in our arguments. 
\begin{lemma}\label{l1}
 Assume that a cocycle $\cA$ is uniformly hyperbolic and that
 \begin{equation}\label{1}
  \limsup_{n\to \infty} \frac 1n \log \lVert \cA(q, n)\rVert_{ic(X)} \le 0,
 \end{equation}
for some $q\in M$. Then, $\dim \Ker P(q) <\infty$.
\end{lemma}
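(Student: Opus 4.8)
The plan is to argue by contradiction. Suppose that $\dim \Ker P(q)=\infty$. The underlying point is that uniform hyperbolicity forces $\cA$ to be \emph{uniformly expanding} along the subspace $\Ker P(q)$; consequently $\cA(q,n)$ spreads out the unit ball of this infinite-dimensional subspace so violently that its image cannot be covered by finitely many balls of subexponentially growing radius, which contradicts~\eqref{1}.

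First I would record the expansion estimate along the ``unstable'' subspace. From the intertwining relation~\eqref{proj} one sees that $A(q)(\Ker P(q))\subseteq \Ker P(f(q))$, and by assumption (1) in the definition of a uniformly hyperbolic cocycle the restriction $A(q)\rvert \Ker P(q)\colon \Ker P(q)\to \Ker P(f(q))$ is an isomorphism; composing along the orbit, $\cA(q,n)\rvert \Ker P(q)\colon \Ker P(q)\to \Ker P(f^n(q))$ is an isomorphism whose inverse is exactly $\cA(f^n(q),-n)$. Applying~\eqref{UH2} at the point $f^n(q)$, and noting that $\Id-P$ acts as the identity on $\Ker P$, we obtain $\lVert \cA(f^n(q),-n)w\rVert \le De^{-\lambda n}\lVert w\rVert$ for every $w\in \Ker P(f^n(q))$, equivalently
\[
\lVert \cA(q,n)v\rVert \ge \tfrac1D e^{\lambda n}\lVert v\rVert \qquad \text{for all } v\in \Ker P(q) \text{ and } n\ge 0 .
\]
In particular every $\Ker P(f^n(q))$ is infinite-dimensional as well, since it is isomorphic to $\Ker P(q)$.

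Next, since $\Ker P(q)$ is a closed, infinite-dimensional subspace of $X$, iterating Riesz's lemma produces a sequence of unit vectors $v_1,v_2,\dots \in \Ker P(q)$ with $\lVert v_i-v_j\rVert \ge \tfrac12$ for all $i\ne j$. Fix $n\ge 0$ and put $w_k:=\cA(q,n)v_k$. Then $w_k\in \cA(q,n)(B_X)$ and, by the expansion estimate, $\lVert w_i-w_j\rVert = \lVert \cA(q,n)(v_i-v_j)\rVert \ge \tfrac{1}{2D}e^{\lambda n}$ for $i\ne j$. An infinite $\tfrac{1}{2D}e^{\lambda n}$-separated set cannot be covered by finitely many balls of radius $r$ whenever $2r<\tfrac{1}{2D}e^{\lambda n}$, because by pigeonhole two of the $w_k$ would then lie in a common ball, forcing $\lVert w_i-w_j\rVert\le 2r$. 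Hence $\lVert \cA(q,n)\rVert_{ic(X)}\ge \tfrac{1}{4D}e^{\lambda n}$ for all $n\ge 0$, and therefore
\[
\limsup_{n\to\infty}\frac1n\log \lVert \cA(q,n)\rVert_{ic(X)} \ge \lambda > 0,
\]
which contradicts~\eqref{1}. This proves that $\dim \Ker P(q)<\infty$.

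I expect the only genuinely delicate point to be the bookkeeping in the first step: correctly reading off the uniform expansion along $\Ker P(q)$ from~\eqref{proj}, from~\eqref{UH2}, and from the definition of $\cA(q,-n)$, while keeping track of which base point the projection and each estimate are attached to. Once that estimate is in place, the remainder is the standard fact that the closed unit ball of an infinite-dimensional normed space contains an infinite, uniformly separated family of unit vectors, together with the elementary covering (pigeonhole) argument.
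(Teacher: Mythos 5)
Your proposal is correct and follows essentially the same route as the paper: assume $\dim \Ker P(q)=\infty$, use Riesz's lemma to get a $\tfrac12$-separated sequence in $B_X\cap\Ker P(q)$, deduce from~\eqref{proj} and~\eqref{UH2} the uniform expansion $\lVert \cA(q,n)v\rVert \ge \tfrac1D e^{\lambda n}\lVert v\rVert$ on $\Ker P(q)$, and conclude $\lVert \cA(q,n)\rVert_{ic(X)}\ge \tfrac{1}{4D}e^{\lambda n}$, contradicting~\eqref{1}. The only difference is that you spell out explicitly the bookkeeping (applying~\eqref{UH2} at the point $f^n(q)$) that the paper leaves implicit, which is a welcome clarification rather than a deviation.
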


\begin{proof}
 Suppose  that $\dim \Ker P(q)=\infty$. By Riesz lemma, we can find a sequence $(e_n)_{n\in \N} \subset B_X\cap \Ker P(q)$ such that $\lVert e_n-e_m\rVert \ge 1/2$ for $n\neq m$.
 It follows from~\eqref{UH2} that
 \[
 \lVert \cA(q,n)(e_k-e_l)\rVert \ge \frac{1}{D}e^{\lambda n} \lVert e_k-e_l\rVert \ge \frac{1}{2D}e^{\lambda n},
 \]
for $n\in \N$ and $k, l\in \N$ such that $k\neq l$. Hence, $\cA(q,n)(B_X)$ cannot be covered by  finitely many balls of radius $\frac{1}{4D}e^{\lambda n}$.
Thus, \[ \lVert \cA(q,n)\rVert_{ic(X)} \ge \frac{1}{4D}e^{\lambda n}\] which implies that
\[
  \limsup_{n\to \infty} \frac 1n \log \lVert \cA(q, n)\rVert_{ic(X)} \ge \lambda >0.
\]
This yields  a contradiction with~\eqref{1}.
\end{proof}

\subsection{Quasicompactness}
We first note that since $A$ is continuous and $M$ is compact, we have that 
\[
 \sup_{q\in M} \lVert A(q)\rVert <\infty.
\]
Hence, 
it follows from the subadditive ergodic theorem that for each ergodic $f$-invariant measure $\mu$, there exist 
$\kappa(\mu), \lambda(\mu)\in [-\infty, \infty)$ such that 
\[
\lim_{n\to \infty} \frac 1 n \log \lVert \cA(q, n)x\rVert =\lambda(\mu) \quad \text{and} \quad \lim_{n\to \infty} \frac 1 n \log \lVert \cA(q, n)x\rVert_{ic(X)}=\kappa(\mu),
\]
for $\mu$-a.e. $q\in M$.  We say that $\cA$ is \emph{quasicompact} with respect to $\mu$ if
\begin{equation}\label{quas}
\kappa(\mu)< \lambda (\mu).
\end{equation}
 To the best of our knowledge, the notion of quasicompactness in the context of cocycles over some  measure-preserving dynamical system originated in the work of Thieullen~\cite{PT}.

The following result which is  proved in~\cite[Lemma C.5.]{GTQ} (building on the work Buzzi~\cite{Buzzi}) provides sufficient conditions under which the cocycle is quasicompact. 
\begin{proposition}\label{prop1}
Let $(X', \lvert \cdot \rvert)$ be a Banach space such that $X\subset X'$ and that the inclusion $(X, \lVert \cdot \rVert) \hookrightarrow (X', \lvert \cdot \rvert)$ is compact. Furthermore,
suppose that $A(q)$ can be extended to a bounded operator on $(X', \lvert \cdot \rvert)$ for each $q\in M$. Finally, let $\mu$ be an ergodic, $f$-invariant Borel probability measure 
such that there exist Borel-measurable functions $\alpha, \beta, \gamma \colon M \to (0, \infty)$ with $\gamma$ being $\log$-integrable such that for $\mu$-a.e. $q\in M$ and $x\in X$,
\begin{equation}\label{LY1}
\lVert A(q)x\rVert  \le \alpha(q)\lVert x\rVert +\beta(q) \lvert x\rvert  
\end{equation}
and
\begin{equation}\label{LY2}
\lVert A(q)\rVert  \le \gamma(q).
\end{equation}
Finally, let us assume that 
\begin{equation}\label{gap}
 \int \log \alpha(q) \, d\mu(q) <\lambda(\mu).
\end{equation}
Then, 
\[
 \kappa (\mu) \le \int_M \log \alpha (q)\, d\mu(q).
\]
In particular,  $\cA$ is quasicompact with respect to $\mu$.
\end{proposition}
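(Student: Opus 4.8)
The plan is to bound $\lVert\cA(q,n)\rVert_{ic(X)}$, for $\mu$-a.e.\ $q\in M$, by a constant \emph{not depending on $n$} times the product $\prod_{j=0}^{n-1}\alpha(f^{j}q)$, and then to divide by $n$, take logarithms and let $n\to\infty$, invoking the Birkhoff ergodic theorem together with the description of $\kappa(\mu)$ as $\lim_{n\to\infty}\frac1n\log\lVert\cA(q,n)\rVert_{ic(X)}$, valid for $\mu$-a.e.\ $q$ (Section~\ref{pre}).

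Two harmless reductions come first. Since $\lambda(\mu)<\infty$, hypothesis \eqref{gap} forces $\log^{+}\alpha\in L^{1}(\mu)$ (equivalently one may replace $\alpha$ by $\min\{\alpha,\gamma\}$ using \eqref{LY2} and the log-integrability of $\gamma$, which only decreases $\int\log\alpha\,d\mu$), so Birkhoff's theorem applies to $\log\alpha$; and since $\mu$ is $f$-invariant, the set of $q$ whose forward orbit ever meets the $\mu$-null set on which \eqref{LY1} fails is itself $\mu$-null. Hence for $\mu$-a.e.\ $q$ one has $\frac1n\sum_{j=0}^{n-1}\log\alpha(f^{j}q)\to\int_M\log\alpha\,d\mu$ and \eqref{LY1} holds at $f^{j}q$ for every $j\ge0$; fix such a $q$. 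I would then prove, by induction on $n$, the Lasota--Yorke-type inequality for the \emph{whole} iterate,
\[
\lVert\cA(q,n)z\rVert\le\Big(\prod_{j=0}^{n-1}\alpha(f^{j}q)\Big)\lVert z\rVert+C_n\lvert z\rvert,\qquad z\in X,
\]
with $C_n=C_n(q)$ finite; the base case is \eqref{LY1}, and in the inductive step one applies \eqref{LY1} at $f^{n}q$ to the vector $\cA(q,n)z\in X$ and uses that $\lvert\cA(q,n)z\rvert\le c_n\lvert z\rvert$ for some finite $c_n$, which holds because each $A(f^{j}q)$ extends to a bounded operator on $(X',\lvert\cdot\rvert)$.

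The geometric heart of the argument uses the compactness of the inclusion $(X,\lVert\cdot\rVert)\hookrightarrow(X',\lvert\cdot\rvert)$: the ball $B_X$ is totally bounded in $(X',\lvert\cdot\rvert)$, so for every $\delta>0$ it is covered by finitely many sets $P_{i}=\{x\in B_X:\lvert x-z_{i}\rvert<\delta\}$ with $z_{i}\in B_X$. Two points of the same $P_{i}$ differ by a vector $z\in X$ with $\lVert z\rVert\le2$ and $\lvert z\rvert<2\delta$, so the displayed inequality shows that $\cA(q,n)(P_{i})$ has $\lVert\cdot\rVert$-diameter at most $2\prod_{j=0}^{n-1}\alpha(f^{j}q)+2C_n\delta$; enclosing each $\cA(q,n)(P_{i})$ in a ball of that radius centred at one of its points gives $\lVert\cA(q,n)\rVert_{ic(X)}\le2\prod_{j=0}^{n-1}\alpha(f^{j}q)+2C_n\delta$, and letting $\delta\to0$ (with $n$ fixed) yields $\lVert\cA(q,n)\rVert_{ic(X)}\le2\prod_{j=0}^{n-1}\alpha(f^{j}q)$. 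Therefore $\frac1n\log\lVert\cA(q,n)\rVert_{ic(X)}\le\frac{\log2}{n}+\frac1n\sum_{j=0}^{n-1}\log\alpha(f^{j}q)$, and letting $n\to\infty$ gives $\kappa(\mu)\le\int_M\log\alpha\,d\mu$; combined with \eqref{gap} this is \eqref{quas}, i.e.\ $\cA$ is quasicompact with respect to $\mu$.

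The step I expect to be most delicate is exactly the one that keeps the constant independent of $n$: a naive iteration of a one-step estimate such as $\lVert A(q)\rVert_{ic(X)}\le2\alpha(q)$ would accumulate a spurious factor $2^{n}$ and yield only the useless bound $\kappa(\mu)\le\log2+\int_M\log\alpha\,d\mu$. Deriving the Lasota--Yorke inequality for $\cA(q,n)$ as a single object, and then performing just \emph{one} covering of $B_X$ by sets that are small in the weak norm $\lvert\cdot\rvert$, is what forces all the losses into the single harmless constant $2$ (the $\lVert\cdot\rVert$-diameter of $B_X$). The remaining points — measurability and the passage between the a.e.\ statements — are routine once the $f$-invariance of $\mu$ has been used as above.
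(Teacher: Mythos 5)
Your argument is correct. The paper itself does not prove Proposition~\ref{prop1} but imports it from \cite[Lemma C.5]{GTQ} (building on \cite{Buzzi}), and your proof is essentially a self-contained reconstruction of that standard argument: an iterated Lasota--Yorke inequality for $\cA(q,n)$ with leading coefficient $\prod_{j=0}^{n-1}\alpha(f^{j}q)$ and an $n$-dependent weak-norm coefficient, followed by a single covering of $B_X$ by sets of small $\lvert\cdot\rvert$-diameter (using compactness of the inclusion), which yields $\lVert\cA(q,n)\rVert_{ic(X)}\le 2\prod_{j=0}^{n-1}\alpha(f^{j}q)$ and hence $\kappa(\mu)\le\int_M\log\alpha\,d\mu$ by Birkhoff; your two preliminary reductions (replacing $\alpha$ by $\min\{\alpha,\gamma\}$ to secure integrability of $\log^{+}\alpha$, and using $f$-invariance to get \eqref{LY1} along full forward orbits) correctly handle the only delicate measure-theoretic points.
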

In the context of   cocycles of the  so-called transfer operators which play a major role in modern dynamical systems theory, 
conditions~\eqref{LY1} and~\eqref{LY2} are called the   strong and weak Lasota-Yorke inequality respectively.  We refer to~\cite{Buzzi, DFGTV, FLQ, GTQ} for details. 

\subsection{Multiplicative ergodic theorem}
Let us state the following version of the multiplicative ergodic theorem.
\begin{theorem}\label{met}
 Assume that $\mu$ is ergodic, $f$-invariant Borel probability measure. Furthermore, suppose that the cocycle $\cA$ is quasicompact with respect to $\mu$. Then, we have the following:
 \begin{enumerate}
  \item there exists $l\in [1, \infty]$ and a sequence of numbers $(\lambda_i)_{i=1}^l$ such that 
  \[
   \lambda(\mu)=\lambda_1 >\lambda_2 >\ldots >\lambda_i > \ldots >\kappa(\mu).
  \]
Furthermore, if $l=\infty$ we have that $\lim_{i\to \infty} \lambda_i =\kappa(\mu)$;
\item for $\mu$-a.e. $q\in M$ there exists a unique and measurable decomposition
\[
 B=\bigoplus_{i=1}^l E_i(q) \oplus F(q),
\]
where $E_i$ are closed, finite-dimensional subspaces of $X$ and $A(q)E_i(q)=E_i(f(q))$. Furthermore, $F(q)$ are closed subspaces of $X$ and $A(q)F(q)\subset F(f(q))$;
\item for each $1\le i \le l$ and $v\in E_i(q)\setminus \{0\}$, we have 
\[
 \lim_{n\to \infty} \frac 1 n \log \lVert \cA(q, n)v\rVert =\lambda_i.
\]
In addition, for every $v\in F(x)$,
\[
 \limsup_{n\to \infty} \frac 1 n \log \lVert \cA(q, n)v\rVert \le \kappa(\mu).
\]

 \end{enumerate}
The numbers $\lambda_i$ are called \emph{Lyapunov exponents} of the cocycle $\cA$ with respect to $\mu$.

\end{theorem}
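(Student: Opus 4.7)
\emph{Proof proposal.} The strategy is to combine Kingman's subadditive ergodic theorem with an inductive construction of equivariant fast/slow splittings, in the spirit of Thieullen~\cite{PT} and the semi-invertible versions of Froyland--Lloyd--Quas~\cite{FLQ} and Gonz\'alez-Tokman--Quas~\cite{GTQ}. First I would obtain the numerical data: the subadditive cocycles $n \mapsto \log \lVert \cA(q,n) \rVert$ and $n \mapsto \log \lVert \cA(q,n) \rVert_{ic(X)}$ produce, via Kingman, the $\mu$-a.e.\ constants $\lambda(\mu)$ and $\kappa(\mu)$. To recover the lower exponents I would apply Kingman to $\log \bigl(\sigma_1(\cA(q,n)) \cdots \sigma_k(\cA(q,n))\bigr)$ for each $k \ge 1$; the resulting $\mu$-a.e.\ limits $\Lambda_k(\mu)$ are concave in $k$, and the distinct Lyapunov exponents arise as the jumps $\lambda_i = \Lambda_{k_i}(\mu) - \Lambda_{k_i-1}(\mu)$. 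Quasicompactness $\kappa(\mu) < \lambda(\mu)$ together with an index-of-compactness estimate in the spirit of Lemma~\ref{l1} forces only finitely many $k$ to give exponents above any fixed level $\lambda > \kappa(\mu)$, which guarantees that each $E_i$ is finite dimensional and that $\lambda_i \downarrow \kappa(\mu)$ whenever there are infinitely many of them.

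Next I would construct the equivariant decomposition. For any fixed $\lambda \in (\kappa(\mu), \lambda(\mu))$ I would build a measurable family of finite-dimensional fast subspaces
\[
 V_\lambda(q) = \Bigl\{ v \in X : \limsup_{n\to\infty} \tfrac{1}{n} \log \lVert \cA(q,n) v \rVert > \lambda \Bigr\},
\]
together with a closed equivariant complement $H_\lambda(q)$ on which $\limsup_n \tfrac{1}{n} \log \lVert \cA(q,n) v \rVert \le \lambda$. Existence of $H_\lambda$ is the most delicate point: one produces it as the limit of pullbacks of a reference slow subspace through a graph-transform / volume-growth argument that uses the gap between the fast growth rate $>\lambda$ and the slow rate $\le \lambda$. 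Applying this construction at each gap $\lambda_i > \lambda > \lambda_{i+1}$ and taking successive intersections yields a filtration, from which choosing complements gives the Oseledets splitting $E_1 \oplus \cdots \oplus E_l \oplus F$. The equivariance $A(q) E_i(q) = E_i(f(q))$ with equality (rather than mere inclusion) then follows because $\lambda_i > -\infty$ combined with the finite dimensionality of $E_i(q)$ forbids a nontrivial kernel of $A(q)$ restricted to $E_i(q)$.

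The main obstacle I expect is the combined issue of \emph{measurable} selection of the splitting together with the construction of the slow subspace $H_\lambda$ in the \emph{non-invertible} Banach setting. In the classical invertible finite-dimensional case, $H_\lambda$ is obtained by straightforward backward iteration, but here one must instead pull back along the base dynamics while simultaneously controlling operator norms via the index of compactness; this is precisely where the quasicompactness assumption~\eqref{quas} enters, and where a measurable graph-transform argument of the type developed in~\cite{PT,FLQ,GTQ} becomes unavoidable. Since the statement as quoted is by now well established in this literature, I would ultimately refer the reader to those references for the detailed technical construction rather than reproduce it here.
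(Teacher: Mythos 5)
The paper itself offers no proof of this statement: Theorem~\ref{met} is imported from Froyland--Lloyd--Quas~\cite{FLQ} (building on Thieullen~\cite{PT}), with the sole remark that the version in~\cite{FLQ} in fact holds under weaker hypotheses. So your closing move of deferring to~\cite{PT,FLQ,GTQ} is exactly what the author does, and at that level your proposal is consistent with the paper.

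However, taken as an outline of an actual proof, two of the central devices you name would fail in the stated generality. First, singular values $\sigma_k$ do not exist for operators on an arbitrary Banach space (there is no inner product), so ``apply Kingman to $\log(\sigma_1(\cA(q,n))\cdots\sigma_k(\cA(q,n)))$'' is not available; the substitutes actually used in this literature are $k$-dimensional volume-growth functionals (Blumenthal~\cite{AB}, Blumenthal--Young~\cite{BY}) or the Grassmannian and index-of-compactness machinery of~\cite{FLQ,GTQ}, and quasicompactness enters through $\lVert\cdot\rVert_{ic(X)}$ rather than through counting singular values above a threshold. Second, the set $V_\lambda(q)=\{v\in X:\limsup_n\frac1n\log\lVert\cA(q,n)v\rVert>\lambda\}$ is not a linear subspace (add a fast vector to its negative perturbed by a slow vector and the sum is slow), and, more fundamentally, in the semi-invertible setting the fast spaces $E_i(q)$ are \emph{not} characterized by forward growth at all: only the slow filtration $\{v:\limsup_n\frac1n\log\lVert\cA(q,n)v\rVert\le\lambda\}$ is. The fast spaces have to be produced as limits of images $\cA(f^{-n}(q),n)W_n$ of suitable subspaces along backward orbits of the base (this uses invertibility of $f$ but not of the cocycle), which is precisely the delicate construction of~\cite{FLQ,GTQ}; your ``graph transform for the slow complement'' picture inverts which half of the splitting is easy and which is hard. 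So the shape of your argument and the references are right, but the sketch as written would not go through without replacing these two steps.
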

Theorem~\ref{met} was established by   Froyland, Lloyd and Quas~\cite{FLQ}, building on the earlier work of Thieullen~\cite{PT} who has obtain analogous result but under additional assumption that
the cocycle $\cA$ is injective. We also note that the version of Theorem~\ref{met} established in~\cite{FLQ} works under milder assumptions both for the base space $(M,f)$ and  for the cocycle 
$\cA$. In particular, for the conclusion of Theorem~\ref{met} to hold it is not necessary to assume that $A$ is a continuous map. Subsequent generalizations of the main result in~\cite{FLQ}
were obtained by Gonz\'alez-Tokman and Quas~\cite{GTQ, GTQ1} and Blumenthal~\cite{AB}.

\subsection{Growth of subadditive functions over semiflows}
In this subsection we recall the very useful result of Cao~\cite{C1}. A sequence of continuous functions $(F_n)_{n\in \N}$, $F_n \colon M \to \R$ is said to be  \emph{subadditive} if
\[
 F_{n+m}(q) \le F_m(f^n(q))+F_n(q), \quad \text{for $q\in M$ and $n, m\in \N$.}
\]
Let $\mathcal E(f)$ denote the set of all ergodic $f$-invariant measures. The following result is due to Cao~\cite[Theorem 4.2.]{C1}.

\begin{theorem}\label{t0}
 There exists $\nu \in \mathcal E(f)$ such that
 \[
 \lim_{n\to \infty} \frac 1 n  \max_{q\in M} F_n(q)=\Lambda(\nu)=\max_{\mu \in \mathcal E(f)} \Lambda (\mu),
 \]
where $\Lambda (\mu)=\inf_{n\in \N}\frac 1n \int_M F_n\, d\mu$.
\end{theorem}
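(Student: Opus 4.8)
The plan is to prove the statement in three movements: first that the limit on the left-hand side exists, then that it coincides with $\sup_{\mu\in\mathcal E(f)}\Lambda(\mu)$, and finally that this supremum is attained by an ergodic measure. Put $a_n:=\max_{q\in M}F_n(q)$, which is finite because $F_n$ is continuous and $M$ is compact. Subadditivity gives $F_{n+m}(q)\le F_m(f^n(q))+F_n(q)\le a_m+a_n$ for every $q$, hence $a_{n+m}\le a_n+a_m$, and by Fekete's subadditive lemma $L:=\lim_{n\to\infty}a_n/n=\inf_{n}a_n/n\in[-\infty,\infty)$ exists. (If $L=-\infty$ the conclusion is degenerate: by the next observation every invariant measure has $\Lambda(\mu)=-\infty$, and $\mathcal E(f)\ne\emptyset$ by Krylov--Bogolyubov together with the ergodic decomposition, so any $\nu\in\mathcal E(f)$ works; assume henceforth that $L$ is finite.) For any $f$-invariant Borel probability measure $\mu$ and every $n$ one has $\tfrac1n\int_M F_n\,d\mu\le a_n/n$, so $\Lambda(\mu)\le L$; in particular $\sup_{\mu\in\mathcal E(f)}\Lambda(\mu)\le L$.

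For the reverse inequality I would construct a near-optimal invariant measure via a Krylov--Bogolyubov argument. Choose $q_n\in M$ with $F_n(q_n)=a_n$, form the empirical measures $\mu_n:=\tfrac1n\sum_{k=0}^{n-1}\delta_{f^k(q_n)}$, and, using the weak$^*$ compactness of the space of Borel probability measures on the compact space $M$, pass to a subsequence with $\mu_{n_j}\rightharpoonup\mu$; the usual computation shows $\mu$ is $f$-invariant. The decisive ingredient is a uniform subadditive averaging inequality: fixing $m\in\N$, splitting $\{0,1,\dots,N-1\}$ into the $m$ arithmetic progressions of step $m$ and iterating subadditivity along each one, one obtains a constant $C_m$ depending only on $m$ (through $F_1,\dots,F_m$ and their sup-norms on $M$) such that
\[
m\,F_N(q)\le\sum_{j=0}^{N-1}F_m\bigl(f^j(q)\bigr)+C_m\qquad\text{for all }q\in M\text{ and }N\ge m.
\]
Evaluating this at $q=q_{n_j}$ with $N=n_j$, dividing by $n_j$, and letting $j\to\infty$ (using $F_m\in C(M)$ and $\mu_{n_j}\rightharpoonup\mu$) yields $mL\le\int_M F_m\,d\mu$, i.e. $L\le\tfrac1m\int_M F_m\,d\mu$ for every $m$, so $\Lambda(\mu)\ge L$ and therefore $\Lambda(\mu)=L$.

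It remains to descend to an ergodic component. By Kingman's subadditive ergodic theorem applied to the (not necessarily ergodic) measure $\mu$, $\tfrac1nF_n\to\bar F$ $\mu$-a.e.\ for some $f$-invariant function $\bar F$ with $\int_M\bar F\,d\mu=\Lambda(\mu)$, and on $\mu$-a.e.\ ergodic component $\mu_\omega$ one has $\bar F\equiv\Lambda(\mu_\omega)$. Integrating the ergodic decomposition $\mu=\int\mu_\omega\,dP(\omega)$ gives $L=\Lambda(\mu)=\int\Lambda(\mu_\omega)\,dP(\omega)$, so there is an $\omega$ with $\Lambda(\mu_\omega)\ge L$, whence $\Lambda(\mu_\omega)=L$ by the first paragraph. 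Taking $\nu:=\mu_\omega$ then gives $\lim_{n\to\infty}\tfrac1n\max_{q\in M}F_n(q)=L=\Lambda(\nu)=\max_{\mu\in\mathcal E(f)}\Lambda(\mu)$, which is the assertion.

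The main obstacle is the uniform averaging inequality in the second paragraph: one must carefully count how the telescoped subadditivity bounds along the $m$ shifted progressions recombine into the Birkhoff sum $\sum_{j<N}F_m(f^j(q))$ and control the $O(m)$ boundary terms uniformly in $q$, for which the compactness of $M$ (hence the uniform boundedness of each $F_i$) is exactly what is needed. Everything else --- the weak$^*$-compactness argument, Kingman's theorem, and the ergodic decomposition --- is standard bookkeeping.
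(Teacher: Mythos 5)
The paper does not actually prove this statement: Theorem~\ref{t0} is imported verbatim from Cao \cite[Theorem 4.2]{C1}, so there is no in-paper argument to compare against. Your proof is, in substance, the standard proof of that cited result (the ``subadditive variational principle'' as in Schreiber \cite{S} and Cao \cite{C1}), and it is correct: Fekete's lemma gives the existence of $L=\lim_n \frac1n\max_q F_n(q)$ and the trivial bound $\Lambda(\mu)\le L$ for every invariant $\mu$; the Krylov--Bogolyubov construction of empirical measures at maximizing points $q_n$, combined with the averaged subadditivity inequality $m F_N(q)\le\sum_{j=0}^{N-1}F_m(f^j(q))+C_m$, produces an invariant weak$^*$ limit $\mu$ with $\Lambda(\mu)\ge L$; and Kingman's theorem together with the ergodic decomposition yields an ergodic component $\nu$ with $\Lambda(\nu)=L$, so the maximum over $\mathcal E(f)$ is attained. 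Two minor points deserve attention. First, your sequential weak$^*$ compactness and the ergodic decomposition implicitly require $M$ to be a compact \emph{metric} (or at least metrizable) space; the paper says only ``compact topological space'', but this is also the setting of \cite{C1}, so it is the right hypothesis to work under (with nets one could relax metrizability for the compactness step, but not painlessly for the decomposition). Second, in the key inequality the passage from the union of truncated arithmetic progressions to the full Birkhoff sum must add back the at most $O(m)$ missing terms, each controlled by $\lVert F_m\rVert_\infty$ (the $F_m$ may be negative, so this is not automatic); likewise the boundary terms $F_i(q)$ and $F_{r_i}(f^{i+k_im}(q))$ with $i,r_i<m$ are bounded by $\max_{1\le i\le m}\lVert F_i\rVert_\infty$, which is finite precisely because the $F_i$ are continuous and $M$ is compact. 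You flag exactly this bookkeeping as the main obstacle, and it goes through as you describe, so the argument is complete.
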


\section{Sacker-Sell spectrum for quasicompact cocycles}\label{SacSell}
In this section we introduce the notion of a Sacker-Sell spectrum for quasicompact cocycles and describe all possible structures of this spectrum. As we have already mentioned, our arguments follow closely the approach
developed in~\cite{BDV}.

>From now on, we consider the cocycle $\cA$ over $f$ such that~\eqref{quas} holds for each $\mu \in \mathcal E(f)$. The following simple consequence of Proposition~\ref{prop1} provides sufficient conditions under
which this is satisfied. 
\begin{proposition}
 Let $(X, \lVert \cdot \rVert)$ and $(X', \lvert \cdot \rvert)$ be as in the statement of Proposition~\ref{prop1}. 
 Furthermore, assume that there exist Borel measurable functions $\alpha, \beta, \gamma \colon M \to (0, \infty)$ with $\gamma$ being $\log$-integrable such that:
 \begin{enumerate}
  \item \eqref{LY1} and~\eqref{LY2} hold for each $q\in M$ and $x\in X$;
  \item \eqref{gap} holds for each $\mu \in \mathcal{E}(f)$.
 \end{enumerate}
Then, \eqref{quas} holds for each $\mu \in \mathcal E(f)$.
\end{proposition}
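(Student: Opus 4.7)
The plan is to derive the conclusion as a direct application of Proposition~\ref{prop1} to each individual ergodic measure. Fix an arbitrary $\mu\in\mathcal{E}(f)$; the task reduces to checking that all hypotheses of Proposition~\ref{prop1} are satisfied for this specific $\mu$. The ambient setup---the auxiliary Banach space $(X',\lvert\cdot\rvert)$, the compact inclusion $(X,\lVert\cdot\rVert)\hookrightarrow(X',\lvert\cdot\rvert)$, and the extendability of each $A(q)$ to $(X',\lvert\cdot\rvert)$---is inherited via the phrase ``as in the statement of Proposition~\ref{prop1}''. The functions $\alpha,\beta,\gamma\colon M\to(0,\infty)$ are provided, with $\gamma$ being $\log$-integrable, directly from the assumptions. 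Inequalities~\eqref{LY1} and~\eqref{LY2} are assumed pointwise in $q\in M$ by hypothesis (1), which is a fortiori stronger than the $\mu$-a.e.\ version actually required by Proposition~\ref{prop1}. Finally,~\eqref{gap} for $\mu$ is furnished directly by hypothesis (2).

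Having verified every hypothesis, Proposition~\ref{prop1} then yields
\[
 \kappa(\mu)\le \int_M \log\alpha(q)\, d\mu(q) <\lambda(\mu),
\]
where the strict second inequality is exactly~\eqref{gap}. This is precisely~\eqref{quas} for this $\mu$. Since $\mu\in\mathcal{E}(f)$ was chosen arbitrarily, the cocycle $\cA$ is quasicompact with respect to every ergodic $f$-invariant measure, which is the desired conclusion. There is no substantial obstacle: the present proposition is essentially a repackaging of Proposition~\ref{prop1} with quantifiers shifted so that the spectral gap condition~\eqref{gap} is imposed uniformly over $\mathcal{E}(f)$; the only small point to verify is that hypothesis (1), stated for every $q\in M$, dominates the $\mu$-a.e.\ formulation used in Proposition~\ref{prop1} regardless of which ergodic $\mu$ one singles out, which is immediate.
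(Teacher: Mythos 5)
Your proposal is correct and follows exactly the paper's argument: both fix an arbitrary $\mu\in\mathcal{E}(f)$, note that the hypotheses of Proposition~\ref{prop1} hold for that $\mu$ (the pointwise assumptions trivially implying the $\mu$-a.e.\ ones), and conclude $\kappa(\mu)\le\int_M\log\alpha\,d\mu<\lambda(\mu)$, which is~\eqref{quas}. No discrepancies to report.
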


\begin{proof}
 By applying Proposition~\ref{prop1} for an arbitrary $\mu \in \mathcal E(f)$, we obtain that
 \[
  \kappa(\mu)\le \int_M \log \alpha(q)\, d\mu(q) <\lambda(\mu),
 \]
and thus~\eqref{quas} holds. 
\end{proof}

\begin{proposition}
 We have  that $\max_{\mu \in \mathcal E(f)} \kappa (\mu)$ exists. 
\end{proposition}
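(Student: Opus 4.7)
The natural strategy is to apply Cao's theorem (Theorem~\ref{t0}) to a sequence of subadditive continuous functions encoding $\kappa$. The obvious candidate is $G_n(q):=\log\lVert\cA(q,n)\rVert_{ic(X)}$. Two elementary covering arguments do most of the work: the submultiplicativity $\lVert AB\rVert_{ic(X)}\le \lVert A\rVert_{ic(X)}\lVert B\rVert_{ic(X)}$ (cover $B(B_X)$ by finitely many balls, apply $A$, and cover $A(B_X)$ by finitely many balls) yields subadditivity of $G_n$; while the Lipschitz estimate $|\lVert A\rVert_{ic(X)}-\lVert B\rVert_{ic(X)}|\le \lVert A-B\rVert$ shows that $q\mapsto \lVert \cA(q,n)\rVert_{ic(X)}$ is continuous, as $q\mapsto \cA(q,n)$ is continuous into $B(X)$.

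The obstacle is that Theorem~\ref{t0} requires the sequence to be $\R$-valued, whereas $G_n$ may equal $-\infty$ at points where $\cA(q,n)$ is compact. I would bypass this with a truncation: for each $C>0$ set
\[
F_n^C(q):=\max\bigl(G_n(q),\,-Cn\bigr).
\]
Then $F_n^C$ is continuous (the $\log$ singularity is cut off), real-valued, and bounded since $G_n(q)\le n\log \sup_{q\in M}\lVert A(q)\rVert<\infty$. Subadditivity of $F_n^C$ is preserved by the elementary inequality $\max(a+b,c+d)\le \max(a,c)+\max(b,d)$ applied with $a=G_n(q)$, $b=G_m(f^n q)$, $c=-Cn$, $d=-Cm$.

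Cao's theorem then produces, for each $C>0$, some $\nu_C\in\mathcal E(f)$ maximizing $\Lambda^C(\mu):=\inf_n \tfrac1n\int F_n^C\,d\mu$. For an ergodic $\mu$, Kingman's subadditive ergodic theorem applied to the bounded sequence $F_n^C$, together with the a.e.\ limit $\tfrac1n G_n\to \kappa(\mu)$ recorded in Section~\ref{pre}, yields
\[
\Lambda^C(\mu)=\lim_{n\to\infty}\frac{1}{n}F_n^C(q)=\max(\kappa(\mu),-C)\qquad\text{for $\mu$-a.e.\ $q$,}
\]
and hence $\max_{\mu\in\mathcal E(f)}\Lambda^C(\mu)=\max(\sup_\mu\kappa(\mu),-C)$.

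To finish, if $\sup_\mu\kappa(\mu)=-\infty$ every ergodic measure trivially realizes the (formal) supremum. Otherwise, pick $C$ so large that $-C<\sup_\mu\kappa(\mu)$; then the measure $\nu_C$ delivered by Cao's theorem satisfies $\max(\kappa(\nu_C),-C)=\sup_\mu\kappa(\mu)>-C$, which forces $\kappa(\nu_C)=\sup_\mu\kappa(\mu)$ and realizes the maximum. The principal difficulty is exactly the truncation step: one must ensure that continuity, subadditivity, and the correct asymptotic identification of $\Lambda^C(\mu)$ with $\max(\kappa(\mu),-C)$ are all preserved simultaneously so that Theorem~\ref{t0} can be invoked cleanly.
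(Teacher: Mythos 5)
Your proof is correct and takes essentially the same route as the paper: apply Cao's Theorem~\ref{t0} to the subadditive, continuous sequence built from $\log\lVert \cA(q,n)\rVert_{ic(X)}$ (the paper's one-line proof writes $F_n(q)=\lVert \cA(q,n)\rVert_{ic(X)}$, evidently meaning its logarithm) and identify $\Lambda(\mu)$ with $\kappa(\mu)$ via Kingman. Your truncation $F_n^C=\max(G_n,-Cn)$ is a sensible extra step that handles the possible value $-\infty$ (when some $\cA(q,n)$ is compact), a point the paper's proof passes over in silence, and the choice of large $C$ correctly recovers the untruncated maximum.
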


\begin{proof}
 This is a direct consequence of Theorem~\ref{t0} applied to a subadditive sequence 
 \[
  F_n(q)=\lVert \cA(q, n)\rVert_{ic(X)}, \quad q\in M, \ n\in \N.
 \]

\end{proof}
Let
\[
\kappa :=\max_{\mu \in \mathcal E(f)}\kappa (\mu)\in [-\infty, \infty)
\]
Furthermore, 
for any $a\in \R$ we can define a  cocycle $\cA_a$ over $f$ by
\[
 \cA_a(q, n)=e^{-an}\cA(q,n), \quad q\in M, \ n\ge 0.
\]
Finally, we define the  Sacker-Sell spectrum $\Sigma=\Sigma (\cA)$ of $\cA$ by
\[
 \Sigma=\bigg{\{} a\in \R :  \text{$a> \kappa$   and $\cA_a$ is not uniformly hyperbolic} \bigg{\}}\subset (\kappa, \infty).
\]
The main aim of this section is  to describe all possible structures of $\Sigma$.
We first introduce some useful notation. For $a\in \R$ and $q\in M$, set
\[
S_a(q)=\bigg{\{} x\in X: \sup_{n\ge 0}  e^{-an}\lVert \cA(q, n)x\rVert <\infty \bigg{\}}.
\]
Moreover, let $U_a(q)$ denote the set of all $x\in X$ for which there exists a sequence $(x_n)_{n\le 0} \subset X$ such that
$\sup_{n\le 0} (e^{-an} \lVert x_n\rVert) <\infty$ and
\[
x_n=A(f^{n-1}(q))x_{n-1} \quad  \text{for $n\le 0$.}
\]
It is easy to verify  that if $\cA_a$ is uniformly hyperbolic with  projections $P(q)$, $q\in M$ that then
\[
\Ima P(q)=S_a(q) \quad \text{and} \quad \Ker P(q)=U(q),
\]
for every $q\in M$. Moreover, for $a_1<a_2$ we have
\[
S_{a_1}(q) \subset S_{a_2}(q) \quad \text{and} \quad U_{a_2}(q) \subset U_{a_1}(q),
\]
for each $q\in M$. We now collect basic properties of the Sacker-Sell spectra which can be found in~\cite{SS}. Although the original work of Sacker and Sell considers only the case of finite-dimensional
 and invertible dynamics, all the auxiliary results we list below
can be easily  proved by repeating the arguments in~\cite{SS} (see also~\cite{BDV}).

\begin{lemma}\label{l2}
$\Sigma$ is a closed set. More precisely, for $a> \kappa$ such that $a\notin \Sigma$, there exists $\epsilon >0$ such that for each $b\in  (a-\epsilon, a+\epsilon)$ we have that:
\begin{enumerate}
 \item $b\notin \Sigma$;
 \item for every $q\in M$, 
 \[
S_b(q)=S_a(q) \quad \text{and} \quad  U_b(q)=U_a(q).
\]
\end{enumerate}
\end{lemma}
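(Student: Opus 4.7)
The plan is to show that if $a>\kappa$ and $a\notin\Sigma$, then the same family of projections witnessing the uniform hyperbolicity of $\cA_a$ also witnesses the uniform hyperbolicity of $\cA_b$ for every $b$ sufficiently close to $a$, which will simultaneously give both conclusions of the lemma.

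First I would fix $a>\kappa$ with $a\notin\Sigma$, take the projections $P(q)$ and constants $D,\lambda>0$ granted by the definition of uniform hyperbolicity for $\cA_a$, and note the simple algebraic fact that the generator of $\cA_b$ is $A_b(q)=e^{-b}A(q)$. Then the identity $A_b(q)P(q)=P(f(q))A_b(q)$ is immediate from the corresponding identity for $A(q)$, the invertibility of $A_b(q)\rvert \Ker P(q)$ follows from that of $A(q)\rvert \Ker P(q)$ (only a nonzero scalar has been added), and the continuity of $q\mapsto P(q)$ is unchanged. So conditions (1) and (2) in the definition of uniform hyperbolicity hold for $\cA_b$ with the same projections for every $b\in\R$.

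Next I would verify the exponential estimates. Since $\cA_b(q,n)=e^{(a-b)n}\cA_a(q,n)$, the bound $\lVert \cA_a(q,n)P(q)\rVert\le De^{-\lambda n}$ gives
\[
\lVert \cA_b(q,n)P(q)\rVert \le D e^{-(\lambda-(b-a))n}, \qquad n\ge 0,
\]
and the analogous identity $\cA_b(q,-n)=e^{-(a-b)n}\cA_a(q,-n)$ gives
\[
\lVert \cA_b(q,-n)(\Id-P(q))\rVert \le D e^{-(\lambda-(a-b))n}, \qquad n\ge 0.
\]
Both exponents remain positive provided $|b-a|<\lambda$. Choosing $\epsilon:=\tfrac12\min\{\lambda, a-\kappa\}$, any $b\in(a-\epsilon,a+\epsilon)$ satisfies $b>\kappa$ and $|b-a|<\lambda$, so $\cA_b$ is uniformly hyperbolic and hence $b\notin\Sigma$, proving (1).

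For (2), I would invoke the characterization mentioned in the paragraph preceding the lemma: if $\cA_c$ is uniformly hyperbolic with projections $P(q)$, then $\Ima P(q)=S_c(q)$ and $\Ker P(q)=U_c(q)$. Applying this with $c=a$ and $c=b$ (using the same projections $P(q)$ just obtained), one concludes $S_b(q)=\Ima P(q)=S_a(q)$ and $U_b(q)=\Ker P(q)=U_a(q)$ for all $q\in M$. The argument is essentially mechanical; the only nontrivial bookkeeping is ensuring $b>\kappa$, which is why the threshold $a-\kappa$ is built into the choice of $\epsilon$. No serious obstacle is expected, since the robustness of exponential dichotomy under a multiplicative perturbation by $e^{(a-b)n}$ is the standard mechanism behind the closedness of the Sacker--Sell spectrum.
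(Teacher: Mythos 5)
Your argument is correct and is essentially the intended one: the paper does not prove Lemma~\ref{l2} but refers to \cite{SS} and \cite{BDV}, and the standard proof is exactly yours, namely that the projections $P(q)$ witnessing uniform hyperbolicity of $\cA_a$ also witness that of $\cA_b$ (via $\cA_b(q,n)=e^{(a-b)n}\cA_a(q,n)$) whenever $|b-a|<\lambda$ and $b>\kappa$, after which $S_b(q)=\Ima P(q)=S_a(q)$ and $U_b(q)=\Ker P(q)=U_a(q)$ for all $q\in M$. One cosmetic slip: your two decay rates are interchanged (the stable bound should read $De^{-(\lambda+(b-a))n}$ and the unstable one $De^{-(\lambda-(b-a))n}$), but since both are positive precisely when $|b-a|<\lambda$ the conclusion is unaffected.
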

Note that for $a>\kappa$ such that $a\notin \Sigma$,  $\dim U_a(q)$ doesn't depend on $q$ and thus  we can simply write $\dim U_a$. Moreover, 
it follows from Lemma~\ref{l1} that  $\dim U_a <\infty$ for each such $a$.
\begin{lemma}\label{sf}
 Take $a_2>a_1>\kappa$ such that $a_1, a_2\notin \Sigma$. Then, the following properties are equivalent:
 \begin{enumerate}
  \item $[a_1, a_2]\subset (\kappa, \infty)\setminus \Sigma$;
  \item $\dim U_{a_1}=\dim U_{a_2}$.
 \end{enumerate}

\end{lemma}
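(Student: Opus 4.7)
The plan is to prove the two implications separately. For $(1) \Rightarrow (2)$, I would rely on Lemma~\ref{l2}: on the open set $(\kappa, \infty) \setminus \Sigma$, assertion (2) of that lemma ensures that the subspaces $U_a(q)$ are locally constant in $a$, hence so are their dimensions, which are finite by Lemma~\ref{l1}. Since $[a_1, a_2]$ is connected and contained in $(\kappa, \infty) \setminus \Sigma$, the map $a \mapsto \dim U_a$ is therefore constant on $[a_1, a_2]$, and in particular $\dim U_{a_1} = \dim U_{a_2}$.

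For the converse $(2) \Rightarrow (1)$, my strategy is first to show that the hyperbolic splittings of $\cA_{a_1}$ and $\cA_{a_2}$ actually coincide, and then to interpolate between the endpoints. Since $U_{a_2}(q) \subset U_{a_1}(q)$, and both are finite-dimensional of the same dimension by hypothesis, I would conclude that $U_{a_1}(q) = U_{a_2}(q)$ for every $q\in M$. Combining the inclusion $S_{a_1}(q) \subset S_{a_2}(q)$ with the direct-sum decompositions $X = S_{a_i}(q) \oplus U_{a_i}(q)$ for $i=1,2$ and the uniqueness of the splitting of any $x \in S_{a_2}(q)$ relative to $S_{a_2}(q) \oplus U_{a_2}(q)$, a short computation then forces $S_{a_1}(q) = S_{a_2}(q)$ as well. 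It follows that the two projections furnished by the uniformly hyperbolic structures of $\cA_{a_1}$ and $\cA_{a_2}$ share the same image and kernel, and hence coincide; denote their common value by $P(q)$.

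With $P$ in hand, I would verify that $\cA_a$ is uniformly hyperbolic with projections $P(q)$ for every $a \in [a_1, a_2]$. The invariance~\eqref{proj}, the invertibility of $A(q)$ on $\Ker P(q)$, and the continuity of $q \mapsto P(q)$ are all inherited from either endpoint. The required exponential bounds come from a simple interpolation: the stable bound~\eqref{UH1} for $\cA_a$ is deduced from the analogous bound for $\cA_{a_1}$ together with $e^{(a_1 - a)n} \le 1$, while the unstable bound~\eqref{UH2} is deduced from that for $\cA_{a_2}$ together with $e^{(a - a_2)n} \le 1$. Since every $a \in [a_1, a_2]$ satisfies $a > \kappa$, this yields $a \notin \Sigma$.

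The main obstacle, and the step deserving the most care, is the passage from $U_{a_1}(q) = U_{a_2}(q)$ to $S_{a_1}(q) = S_{a_2}(q)$: it relies crucially on the fact that both pairs $\bigl(S_{a_i}(q), U_{a_i}(q)\bigr)$ give genuine direct-sum decompositions of the entire space $X$, so that each vector has a unique decomposition. Once this identification of splittings is achieved, the remaining interpolation is straightforward bookkeeping with exponential rates.
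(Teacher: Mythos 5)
Your argument is correct, and it is essentially the standard Sacker--Sell argument: the paper itself gives no proof of Lemma~\ref{sf}, stating only that it can be obtained by repeating the arguments of \cite{SS} (see also \cite{BDV}), which proceed exactly as you do. Both implications are sound as written: local constancy of $U_a(q)$ from Lemma~\ref{l2} plus connectedness of $[a_1,a_2]$ gives $(1)\Rightarrow(2)$, and for $(2)\Rightarrow(1)$ the dimension count gives $U_{a_1}(q)=U_{a_2}(q)$, the direct-sum uniqueness argument upgrades $S_{a_1}(q)\subset S_{a_2}(q)$ to equality, and the interpolation $e^{(a_1-a)n}\le 1$, $e^{(a-a_2)n}\le 1$ (together with $a\ge a_1>\kappa$) shows every $a\in[a_1,a_2]$ lies outside $\Sigma$.
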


\begin{lemma}\label{sf1}
 We have that $\sup \Sigma <\infty$. 
\end{lemma}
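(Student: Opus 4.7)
The plan is to exploit the compactness of $M$ together with the continuity of the generator $A$ to obtain a uniform bound $C := \sup_{q \in M} \lVert A(q) \rVert < \infty$, and then to verify directly that $\cA_a$ is uniformly hyperbolic whenever $a > \log C$, with the trivial choice of projections $P(q) \equiv \Id$. This will immediately give $\sup \Sigma \le \log C < \infty$.

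More precisely, first I would note that by submultiplicativity,
\[
\lVert \cA(q,n) \rVert \le \prod_{k=0}^{n-1} \lVert A(f^k(q)) \rVert \le C^n,
\]
so that $\lVert \cA_a(q,n) \rVert \le e^{-(a - \log C) n}$ for every $q \in M$ and $n \ge 0$. Setting $P(q) = \Id$ for all $q$, the compatibility condition~\eqref{proj} is trivially satisfied, the map $q \mapsto P(q)$ is constant and hence continuous, and the restriction $A(q) | \Ker P(q)$ is a map between trivial subspaces, so it is (vacuously) invertible. The first inequality~\eqref{UH1} becomes
\[
\lVert \cA_a(q,n) P(q) \rVert = \lVert \cA_a(q,n) \rVert \le e^{-(a - \log C) n},
\]
which holds with $D = 1$ and $\lambda = a - \log C > 0$, while the second inequality~\eqref{UH2} holds vacuously because $\Id - P(q) = 0$.

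Consequently, any $a > \log C$ fails to belong to $\Sigma$, and therefore $\sup \Sigma \le \log C$. I do not anticipate a real obstacle here; the only point requiring a bit of care is simply checking that $P \equiv \Id$ actually satisfies the (possibly nontrivial to parse) definition of uniform hyperbolicity, in particular that condition~\eqref{UH2} imposes no constraint when the unstable space is trivial. Note also that $\log C$ could in principle be $-\infty$ if $A \equiv 0$, in which case $\Sigma = \emptyset$ and the statement $\sup \Sigma < \infty$ holds under the usual convention.
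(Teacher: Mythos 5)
Your proof is correct and follows essentially the same route as the paper: bound $\lVert A(q)\rVert$ by a constant $C$ using continuity and compactness, deduce $\lVert \cA(q,n)\rVert \le C^n$, and observe that $\cA_a$ is uniformly hyperbolic with $P(q)=\Id$ for every $a>\log C$. Your explicit verification of conditions~\eqref{UH1} and~\eqref{UH2} for the trivial projections only spells out what the paper leaves implicit.
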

\begin{proof}
 Since $A$ is continuous and $M$ is compact, there exists $C>0$ such that $\lVert A(q)\rVert \le C$ for $q\in M$. Hence,
 \begin{equation}\label{ub}
  \lVert \cA(q, n)\rVert \le C^n \quad \text{for $q\in M$ and $n\in \N$.}
 \end{equation}
It follows from~\eqref{ub} that for each $a>\log C$, the cocycle $\cA_a$ is uniformly hyperbolic with projections $P(q)=\Id$, $q\in M$. This readily implies the desired conclusion.
 
\end{proof}

The following lemma is crucial for our purposes.

\begin{lemma}\label{FN}
Let $c>\kappa$ such that $c\notin \Sigma$. Then, $\Sigma \cap [c, \infty)$ is the union of finitely many closed intervals.
\end{lemma}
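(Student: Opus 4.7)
The plan is to show that the relatively open set $[c,\infty)\setminus\Sigma$ has only finitely many connected components; since $\Sigma$ is closed (Lemma~\ref{l2}) and $\sup\Sigma<\infty$ (Lemma~\ref{sf1}), the complement in $[c,\infty)$ of finitely many intervals $J_i$ will automatically be a finite union of closed (possibly degenerate) intervals, namely the ``gaps'' between consecutive $J_i$. The integer invariant that will control the number of components is $\dim U_a$, which by Lemma~\ref{l2} is constant on each connected component of $(\kappa,\infty)\setminus\Sigma$.

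The essential ingredient is the remark following Lemma~\ref{l2}: $\dim U_c<\infty$. This is a consequence of Lemma~\ref{l1} applied to $\cA_c$, using that $c>\kappa$ together with Theorem~\ref{t0} applied to the subadditive sequence $F_n(q)=\log\lVert\cA(q,n)\rVert_{ic(X)}$, which yields $\limsup_{n\to\infty}\frac{1}{n}\log\lVert\cA(q,n)\rVert_{ic(X)}\le\kappa$ for every $q\in M$, hence $\limsup_{n\to\infty}\frac{1}{n}\log\lVert\cA_c(q,n)\rVert_{ic(X)}\le\kappa-c<0$. The monotonicity $U_{a_2}\subset U_{a_1}$ for $a_1<a_2$ then gives $\dim U_a\le\dim U_c$ for every $a\ge c$ with $a\notin\Sigma$.

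Next I use Lemma~\ref{sf} to separate the components. If $a_1<a_2$ lie in distinct components of $[c,\infty)\setminus\Sigma$, then $[a_1,a_2]$ must meet $\Sigma$, so Lemma~\ref{sf} forces $\dim U_{a_1}\neq\dim U_{a_2}$. Consequently distinct components carry distinct nonnegative integer values of $\dim U_a$, all bounded above by $\dim U_c$, so there are at most $\dim U_c+1$ of them. Enumerating the components from left to right as $J_1=[c,r_1),\,J_2=(l_2,r_2),\ldots,J_m=(l_m,\infty)$ (the last component is unbounded on the right because $\sup\Sigma<\infty$), we obtain
\[
\Sigma\cap[c,\infty)=[r_1,l_2]\cup[r_2,l_3]\cup\cdots\cup[r_{m-1},l_m],
\]
a finite union of closed intervals, as claimed. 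If $\Sigma\cap[c,\infty)=\emptyset$ then $m=1$ and the statement is vacuous.

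The only genuine obstacle is the finite-dimensionality of $U_c$, which rests on Cao's Theorem~\ref{t0} to control the pointwise growth of the index of compactness uniformly in $q$; once this is in hand, the rest of the argument is a routine packaging of the monotonicity of $a\mapsto U_a$ together with Lemma~\ref{sf}.
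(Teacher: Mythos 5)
Your proof is correct and follows essentially the same route as the paper: both arguments hinge on $\dim U_c<\infty$, the monotonicity of $a\mapsto U_a$, and Lemma~\ref{sf}, which force $\dim U_a$ to take distinct values on distinct components across spectral gaps, so the number of components of $\Sigma\cap[c,\infty)$ is bounded by $\dim U_c$ (the paper phrases this as a contradiction from assuming $\dim U_c+2$ disjoint spectral intervals, while you count components of the complement and package the gaps). The extra care you take in justifying $\dim U_c<\infty$ via Cao's theorem and Lemma~\ref{l1}, and in checking that the gaps are closed intervals, only makes explicit what the paper leaves to the remark after Lemma~\ref{l2} and to the closedness of $\Sigma$.
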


\begin{proof}
Let $d=\dim U_c$ and  assume that $\Sigma \cap [c,+\infty)$ contains at least $d+2$ disjoint closed intervals $I_i=[\alpha_i, \beta_i]$, for $i=1,\ldots, d+2$, where
\[
\alpha_1 \le \beta_1 <\alpha_2 \le \beta_2 <\cdots <\alpha_{d+2} \le \beta_{d+2}< +\infty.
\]
For $i\in \{1, \ldots , d+1\}$, take $c_i \in (\beta_i, \alpha_{i+1})$. By Lemma~\ref{sf},
\[
d>\dim U_{c_1} >\dim U_{c_2} >\cdots >\dim U_{c_{d+1}},
\]
which is clearly impossible.
\end{proof}
The following is the main result of this section. 
\begin{theorem}\label{SSS}
One of the following alternatives holds:
\begin{enumerate}
\item
$\Sigma=\emptyset$;
\item
$\Sigma= \bigcup_{n=1}^k [a_n,b_n]$,  for some numbers
\begin{equation}\label{*N}
b_1 \ge a_1>b_2 \ge a_2 > \cdots > b_k \ge a_k > \kappa;
\end{equation}
\item
$\Sigma= \bigcup_{n=1}^{k-1} [a_n,b_n] \cup (\kappa,b_k]$, for some numbers $a_n$ and $b_n$ as in~\eqref{*N};
\item
$\Sigma= \bigcup_{n=1}^\infty [a_n, b_n]$,  for some numbers
\begin{equation}\label{WW}
b_1 \ge a_1>b_2 \ge a_2 >\cdots
\end{equation}
with $\lim_{n\to +\infty} a_n=\kappa$;
\item
$\Sigma=\bigcup_{n=1}^\infty [a_n, b_n] \cup (\kappa,b_\infty]$,  for some numbers $a_n$ and $b_n$ as in~\eqref{WW} with $b_\infty:=\lim_{n\to +\infty} a_n >\kappa$.
\end{enumerate}
\end{theorem}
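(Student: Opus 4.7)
The approach is a clean case analysis driven by Lemma~\ref{FN}, which reduces the description of $\Sigma$ to finitely many intervals as soon as one is bounded away from $\kappa$. First, if $\Sigma=\emptyset$ we are in alternative~(1). Otherwise, if $\kappa$ is not an accumulation point of $\Sigma$ from above, there exists $c>\kappa$ with $c\notin\Sigma$ and $(\kappa,c]\cap\Sigma=\emptyset$; applying Lemma~\ref{FN} at this $c$ gives $\Sigma=\bigcup_{n=1}^{k}[a_n,b_n]$, and the strict decreasing ordering of~(2) is then forced by Lemma~\ref{sf}.

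Suppose next that $\kappa$ is an accumulation point of $\Sigma$ from above but no right-neighborhood $(\kappa,c]$ lies entirely in $\Sigma$. Then I would choose a sequence $c_m\searrow\kappa$ with each $c_m\notin\Sigma$ and apply Lemma~\ref{FN} to each to obtain finite decompositions $\Sigma\cap[c_m,\infty)=\bigcup_{n=1}^{k_m}[a_n^{(m)},b_n^{(m)}]$. These decompositions are compatible because passing from $c_m$ to $c_{m+1}$ only adjoins intervals strictly to the left of those already present, so the finite lists amalgamate into a single sequence $\{[a_n,b_n]\}_{n\ge 1}$ satisfying~\eqref{WW}; the accumulation hypothesis then forces $a_n\to\kappa$, which is alternative~(4).

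The remaining case is that some $(\kappa,c]\subset\Sigma$. I would set $b_\infty=\sup\{d>\kappa:(\kappa,d]\subset\Sigma\}$, use Lemma~\ref{sf1} to see that $b_\infty<\infty$, and use the closedness of $\Sigma$ in $(\kappa,\infty)$ coming from Lemma~\ref{l2} to conclude $(\kappa,b_\infty]\subset\Sigma$. Now I repeat the previous dichotomy with $b_\infty$ in place of $\kappa$, applied to $\Sigma\cap(b_\infty,\infty)$: either there is a gap immediately above $b_\infty$, in which case Lemma~\ref{FN} applied at a point of this gap writes the remainder as a finite union of closed intervals, yielding alternative~(3); or $b_\infty$ is itself an accumulation point of $\Sigma\cap(b_\infty,\infty)$ from above, and the argument of the previous paragraph supplies countably many intervals $[a_n,b_n]$ with $a_n\downarrow b_\infty$, which is alternative~(5).

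The main bookkeeping difficulty will be confirming that the countably many intervals in alternatives~(4) and~(5) truly form a single strictly descending chain, accumulating only at $\kappa$ or at $b_\infty$ respectively; this is exactly what Lemma~\ref{FN} enforces, since each level set $\Sigma\cap[c,\infty)$ with $c$ above the accumulation level contains only finitely many intervals, so no other accumulation can occur and the descending listing is unambiguous.
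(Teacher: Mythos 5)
Your argument is correct and takes essentially the same route as the paper: both rest entirely on Lemma~\ref{FN} together with the closedness of $\Sigma$ and Lemma~\ref{sf1}, and both split according to whether $\Sigma$ accumulates only at $\kappa$, at some $b_\infty>\kappa$, or not at all. The difference is purely organizational — you trichotomize at $\kappa$ and define $b_\infty$ as a supremum, while the paper builds a decreasing sequence of points outside $\Sigma$ and distinguishes the two possible limits of that sequence — so it is the same proof in slightly different packaging.
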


\begin{proof}
Since $\Sigma$ is closed and $\Sigma \neq (\kappa, \infty)$ (see Lemma~\ref{sf1}), if $\Sigma$ is nonempty and has finitely many connected components, then 
it has one of the forms in alternatives $2$ and~$3$. 

Now we consider the case when $\Sigma$ has infinitely many connected components. Namely, assume that $\Sigma$ is not given by one of the first three alternatives in the theorem and take $c_1\notin \Sigma$. By Lemma~\ref{FN}, the set $\Sigma \cap (c_1,+\infty)$ consists of finitely many disjoint closed intervals $I_1, \ldots , I_k$.
We note that $\Sigma \cap (\kappa, c_1) \ne \emptyset$, since otherwise we would have $\Sigma=I_1 \cup \cdots \cup I_k$, which contradicts to our assumption.
Now we observe that there exists $c_2 <c_1$ such that $c_2\notin \Sigma$ and $(c_2, c_1)\cap \Sigma \neq \emptyset$. Indeed, otherwise we would
have $(\kappa, c_1)\cap \Sigma= (\kappa, a]$ for some $a<c_1$ and thus,
\[
\Sigma=(\kappa, a] \cup I_1 \cdots \cup I_k,
\]
which again contradicts to our assumption. Proceeding inductively, we obtain a decreasing sequence $(c_n)_{n \in \N}$ such that $c_n \notin\Sigma$ and $(c_{n+1}, c_n) \cap \Sigma \neq \emptyset$ for each $n\in \N$. Now there are two possibilities:
either $\lim_{n\to+\infty} c_n=\kappa$ or $\lim_{n\to +\infty} c_n=b_\infty$ for some $b_\infty >\kappa$. In the first case, it follows from Lemma~\ref{FN} that $\Sigma$ is given by alternative~5 in the theorem. In the second case, it follows from Lemma~\ref{FN} that
\[
(b_\infty,+\infty)\cap \Sigma=I_1\cup \bigcup_{n=2}^\infty [a_n, b_n],
\]
where $I_1=[a_1, b_1]$ or $I_1=[a_1,+\infty)$, for some sequences $(a_n)_{n \in \N}$ and $(b_n)_{n \in \N}$ as in~\eqref{WW} with $b_\infty=\lim_{n\to +\infty} a_n$. Again by Lemma~\ref{FN}, we have $(\kappa, b_\infty] \subset \Sigma$ and so $\Sigma$ is given by the last alternative in the theorem.
\end{proof}
We note that the structure of the Sacker-Sell spectrum for \emph{compact} operators on an arbitrary Banach space under the additional assumption that $f$ has a fixed or a periodic orbit was discussed in~\cite{LM}. There, a result similar to Theorem~\ref{SSS} was established. In particular, it was proved that
the last alternative in the statement of Theorem~\ref{SSS} never occurs in that setting. 

\section{A version of the theorem of Johnson, Palmer and Sell}\label{JPSS}
We begin this section by noting that our assumption that~\eqref{quas} holds for any $\mu \in \mathcal E(f)$, enable us to apply Theorem~\ref{met}  for each $\mu \in \mathcal E(f)$. Hence, we obtain the set of Lyapunov exponents of our cocycle $\cA$ with respect an arbitrary measure in
$\mathcal E(f)$. 
The following  version of the theorem of Johnson, Palmer and Sell~\cite{JPS} for quasicompact cocycles is the main result of our paper.

\begin{theorem}\label{jps}
Assume that $\Sigma \neq \emptyset$. 
 The endpoints $a_n$ and $b_n$ for $n\in \N$ of spectral intervals contained in $\Sigma$ are Lyapunov exponents of the cocycle $\cA$ with respect to some ergodic $f$-invariant measure.
\end{theorem}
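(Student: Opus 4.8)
The plan is to reduce the statement about spectral endpoints to an application of Cao's theorem (Theorem~\ref{t0}) to suitably chosen subadditive sequences, in the spirit of the strategy outlined in~\cite{DD}. Fix a spectral interval $[a_n,b_n]\subset\Sigma$; I treat the upper endpoint $b_n$ first. By Theorem~\ref{SSS} and Lemma~\ref{l2}, choose $a>b_n$ with $a\notin\Sigma$ and $a$ smaller than the next endpoint above (so that $(b_n,a]\cap\Sigma=\emptyset$); then $\cA_a$ is uniformly hyperbolic with projections $P(q)$, and $\Ker P(q)=U_a(q)$ has a fixed finite dimension $d=\dim U_a$. Similarly pick $a'<a_n$ with $a'\notin\Sigma$, so $\dim U_{a'}=d+m$ where $m\ge 1$ is the "multiplicity" of the interval $[a_n,b_n]$; since $[a_n,b_n]\subset\Sigma$, Lemma~\ref{sf} forces $m\ge1$. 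The key geometric object is the continuous, $\cA$-invariant subbundle $W(q)$ of dimension $m$ obtained as a complement of $U_a(q)$ inside $U_{a'}(q)$ (using invariance~\eqref{proj} of the two projections for $\cA_a$ and $\cA_{a'}$ and that both are continuous), on which the cocycle acts invertibly. All Lyapunov exponents seen by vectors in $W$ lie in $[a_n,b_n]$.

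Next I build the subadditive sequences that detect the endpoints. On the $m$-dimensional bundle $W$ the exterior power gives, for $1\le j\le m$, the one-dimensional cocycle $\bigwedge^j\cA\rvert\bigwedge^j W$; set
\[
 F_n^{(j)}(q)=\log\bigl\lVert (\textstyle\bigwedge^j\cA)(q,n)\rvert\textstyle\bigwedge^j W(q)\bigr\rVert,
\]
which is continuous in $q$ (since $W$ is a continuous finite-dimensional bundle) and subadditive by the multiplicative cocycle property. The top endpoint $b_n$ should be recovered from $j=1$: $\lim_n\frac1n\max_q F_n^{(1)}(q)$ equals $b_n$, because for any $\mu\in\mathcal E(f)$ the quantity $\Lambda(\mu)$ for this sequence is the largest Lyapunov exponent of $\cA$ on $W$ with respect to $\mu$, which lies in $[a_n,b_n]$; and a uniform-hyperbolicity / Sacker--Sell argument (using that $b_n\in\Sigma$ is itself in the spectrum, so $\cA_{b_n}$ is not uniformly hyperbolic on $W$, while $\cA_{a}$ is) forces the supremum over $\mu$ to reach $b_n$. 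Cao's theorem then produces $\nu\in\mathcal E(f)$ attaining this maximum, and by Theorem~\ref{met} (applicable since~\eqref{quas} holds for all ergodic measures) $b_n$ is a Lyapunov exponent of $\cA$ with respect to $\nu$. For the lower endpoint $a_n$, apply the same argument to the \emph{inverse} cocycle on $W$, i.e. to $G_n^{(1)}(q)=\log\lVert(\cA\rvert W)(q,-n)\rVert$ (well defined since $\cA$ is invertible on $W$); then $-\lim_n\frac1n\max_q G_n^{(1)}(q)$ equals $a_n$, and Cao's theorem again furnishes the realizing ergodic measure.

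I expect the main obstacle to be the two claimed identities
\[
 \lim_{n\to\infty}\frac1n\max_{q\in M}F_n^{(1)}(q)=b_n,\qquad
 -\lim_{n\to\infty}\frac1n\max_{q\in M}G_n^{(1)}(q)=a_n,
\]
that is, showing the supremum over ergodic measures of the relevant growth rate on $W$ \emph{actually hits} the endpoint of the spectral interval, rather than merely staying in $[a_n,b_n]$. The inequality $\le b_n$ (resp.\ $\ge a_n$) is routine from the pointwise growth estimates of Theorem~\ref{met} together with $\Lambda(\nu)=\max_\mu\Lambda(\mu)$ in Theorem~\ref{t0}. The reverse inequality is where one must use that $b_n,a_n\in\Sigma$: if, say, $\max_\mu\Lambda(\mu)<b_n$ for the sequence $F_n^{(1)}$, I would argue — exactly as in the Sacker--Sell arguments imported from~\cite{SS,BDV} and used in Lemmas~\ref{l2}--\ref{FN} — that $\cA_{b_n}$ restricted to (and split along) $W$ would then admit uniform exponential dichotomy, which combined with the dichotomy of $\cA_a$ on $U_a$ and on the stable part yields uniform hyperbolicity of $\cA_{b_n}$ on all of $X$, contradicting $b_n\in\Sigma$. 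Making this gluing of dichotomies precise, and doing the symmetric argument for $a_n$ via the inverse cocycle, is the technical heart of the proof; the rest is bookkeeping with exterior powers and an application of Theorem~\ref{t0} and Theorem~\ref{met}.
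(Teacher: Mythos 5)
Your proposal is sound and rests on the same two pillars as the paper's own proof---Cao's Theorem~\ref{t0} applied to a continuous subadditive sequence built from the hyperbolic splittings at nearby non-spectral points $a>b_n$ and $a'<a_n$, together with a contradiction with $b_n\in\Sigma$ (resp.\ $a_n\in\Sigma$) obtained by assembling a uniform dichotomy for $\cA_{b_n}$ (resp.\ $\cA_{a_n}$)---but you package it differently. The paper takes $F_n(q)=\log\lVert\cA(q,n)P(q)\rVert$ on the whole, generally infinite-dimensional, stable bundle, needs Lemma~\ref{le} (via \cite[Proposition 14]{FLQ}) to know that $\Lambda(\mu)$ is always a Lyapunov exponent, and reaches the contradiction by assuming $b_m$ is not an exponent; you instead work on the finite-dimensional invariant bundle $W=S_a\cap U_{a'}=\Ima\bigl(P_a-P_{a'}\bigr)$, prove that $\max_\mu$ of the top exponent of $\cA\rvert W$ equals $b_n$, and treat $a_n$ via the inverse cocycle on $W$ over $f^{-1}$ (the paper only says the lower endpoint is ``analogous''). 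Your route buys finite-dimensionality: the classical Oseledets theorem on $W$ replaces \cite[Proposition 14]{FLQ}. It costs two points you should make explicit: continuity and subadditivity of $F_n^{(1)}$ (cleanest with $\log\lVert\cA(q,n)(P_a(q)-P_{a'}(q))\rVert$, since $P_a-P_{a'}$ is a continuous equivariant projection onto $W$), and the passage from $\Lambda(\nu)=b_n$ to ``$b_n$ is a Lyapunov exponent of $\cA$ with respect to $\nu$'', which is exactly the analogue of the paper's Lemma~\ref{le}: the exponent is attained by an actual vector of $W(q)$ and exceeds $\kappa\ge\kappa(\nu)$, hence must be one of the $\lambda_i$ of Theorem~\ref{met}. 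The ``gluing'' you single out as the technical heart is in fact routine and coincides with the paper's closing estimates \eqref{sa1}--\eqref{sa2}: if the Cao limit were $<b_n$, then $e^{-b_nn}\cA(q,n)$ contracts uniformly on $W$ and on $S_{a'}$ (since $a'<b_n$), hence on $S_a=S_{a'}\oplus W$ by boundedness of the projections, while it expands uniformly backwards on $U_a$ (since $a>b_n$), so $\cA_{b_n}$ is uniformly hyperbolic, a contradiction; the exterior powers are superfluous, as only $j=1$ is used. Finally, note a caveat you share with the paper: both arguments use a spectral gap below $a_n$, so neither covers the right endpoint of a half-open bottom component $(\kappa,b_k]$, should the theorem be read as including it.
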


\begin{proof}
 Take a spectral  interval $[a_m, b_m]\subset \Sigma$ and let us prove that  $b_m$ is a Lyapunov exponent of $\cA$
 with respect to some ergodic $f$-invariant measure. The argument for $a_m$ is completely analogous. Assume that $b_m$ is not a Lyapunov  exponent of $\cA$  and 
 take  $c=b_m+\epsilon$, where  $\epsilon >0$. For a sufficiently small $\epsilon >0$, 
  $c\notin \Sigma$ and therefore   the cocycle $\cA_{c}$ is uniformly hyperbolic with respect to projections $P(q)$, $q\in M$ that satisfy
 \[
  \Ima P(q)=S_{c}(q) \quad \text{and} \quad \Ker P(q)=U_c(q).
 \]
 In addition, by Lemma~\ref{sf} the subspaces $S_c(q)$ and $U_c(q)$ don't depend on the choice of $c$.
 In particular, there exist $D, \lambda >0$ such that
\[
 \lVert e^{-cn}\cA(q,n)P(q)\rVert \le De^{-\lambda n}, \quad \text{for $n\ge 0$},
\]
which implies that for $v\in S_{c}(q)$,
\[
 \limsup_{n\to \infty} \frac 1 n\log \lVert \cA(q,n)v\rVert \le c-\lambda <c.
\]
Letting $\epsilon \to 0$, we have that  $c\to b_m$ and therefore
\begin{equation}\label{in}
  \limsup_{n\to \infty} \frac 1 n\log \lVert \cA(q,n)v\rVert \le b_m.
\end{equation}
Set 
\[
 F_n(q)=\log \lVert \cA(q, n)P(q)\rVert \quad \text{for $q\in M$ and $n\in \N$.}
\]
It follows from the continuity of $A$ and the map $q\mapsto P(q)$ that $F_n\colon M \to \mathbb R$ is continuous map for each $n\in \N$.
\begin{lemma}
 The sequence $(F_n)_{n\in \mathbb N}$ is subadditive.
\end{lemma}

\begin{proof}[Proof of the lemma]
It follows from~\eqref{proj} that 
\[
 \begin{split}
  \lVert \cA(q, n+m)P(q)\rVert &=\lVert \cA(q, n+m)P(q)^2\rVert \\
  &=\lVert \cA(f^n(q), m)\cA(q, n)P(q)P(q)\rVert \\
  &=\lVert \cA(f^n(q), m)P(f^n(q))\cA(q, n)P(q)\rVert \\
  &\le \lVert \cA(f^n(q), m)P(f^n(q))\rVert \cdot \lVert \cA(q, n)P(q)\rVert,
 \end{split}
\]
for $q\in M$ and $n, m\in \N$ which immediately implies the desired conclusion.

\end{proof}
For $\mu \in \mathcal E(f)$, set 
\[
 \Lambda(\mu)=\inf_{n\in \mathbb N} \frac 1 n \int_M F_n\, d\mu.
\]
It follows from Kingman's subadditive ergodic theorem that
\[
 \Lambda(\mu) =\lim_{n\to \infty}\frac{1}{n}F_n(q) \quad \text{for $\mu$-a.e. $q\in M$.}
\]

\begin{lemma}\label{le}
 $\Lambda(\mu)$ is a Lyapunov exponent of $\cA$ with respect to $\mu$.
\end{lemma}

\begin{proof}[Proof of the lemma]
Using the arguments similar to those we used to establish~\eqref{in}, one can easily prove that for each nonzero $v\in S_c(q)\cap U_{a_m-\epsilon}(q)$ 
(we note that such $v$ exists by Lemma~\ref{l2}), we have
\[
 \limsup_{n\to \infty} \frac 1 n\log \lVert \cA(q,n)v\rVert \ge a_m, 
\]
which immediately implies that $\Lambda(\mu)\ge a_m$. Hence, \[\Lambda(\mu) \ge a_m >\kappa \ge \kappa(\mu).\]
Let $\lambda_1>\lambda_2 >...$ denote (distinct) Lyapunov exponent of $\cA$ with respect to $\mu$ and assume that $\Lambda(\mu)\neq \lambda_j$ for each $j$. Choose $i$ such that
$\Lambda(\mu) \in (\lambda_{i+1}, \lambda_i)$. Then, for $\mu$-a.e. $q\in M$ and  each $v\in \Ima P(q)$, we have 
\[
 \lim_{n\to \infty} \frac 1 n \log \lVert \cA(q, n)v\rVert \le \lim_{n\to \infty} \frac 1 n \log \lVert \cA(q, n)P(q)\rVert<\lambda_i,
\]
and thus it follows from Theorem~\ref{met} that
\[
  \lim_{n\to \infty} \frac 1 n \log \lVert \cA(q, n)v\rVert \le \lambda_{i+1}
\]
By~\cite[Proposition 14.]{FLQ}, we have  that 
\[
 \lim_{n\to \infty}\frac 1 n \log \lVert \cA(q, n)\rvert \Ima P(q)\rVert \le \lambda_{i+1},
\]
which together with the continuity of the map $q\mapsto P(q)$  implies that $\Lambda(\mu)\le \lambda_{i+1}$. Hence, we have obtained an contradiction.

\end{proof}
We now note that~\eqref{in}, Lemma~\ref{le} and our assumption on $b_m$ imply that $\Lambda(\mu)<b_m$ for each $\mu \in \mathcal E(f)$. By Theorem~\ref{t0}, we have that
\[
 \lim_{n\to \infty} \frac 1 n\max_{q\in M}F_n(q) <b_m,
\]
which readily implies that there exist $D, \lambda>0$ such that
\begin{equation}\label{sa1}
 \lVert e^{-b_mn}\cA(q, n)P(q)\rVert \le De^{-\lambda n} \quad \text{for $q\in M$ and $n\ge 0$.}
\end{equation}
Similarly, one can show that there exist $D', \lambda'>0$ such that
\begin{equation}\label{sa2}
 \lVert e^{b_mn}\cA(q, -n)P(q)\rVert \le D'e^{-\lambda' n} \quad \text{for $q\in M$ and $n\ge 0$.}
\end{equation}
It follows from~\eqref{sa1} and~\eqref{sa2} that $\cA_{b_m}$ is uniformly hyperbolic. Therefore, $b_m \notin \Sigma$ which yields a contradiction.

\end{proof}
We emphasize that Theorem~\ref{jps} says nothing about $b_\infty$. In the case when $f$ is uniquely ergodic (which means that $\mathcal E(f)$ consists of only one measure), $b_\infty$
cannot be realized as  a Lyapunov exponent since   Lyapunov exponents with respect to a given measure $\mu$  can only accumulate in $\kappa (\mu)$ (see Theorem~\ref{met}).
As for what happens in general case, it is unclear to me. 

\section{acknowledgements}
I would like to express my sincere gratitude to the anonymous referee of my paper~\cite{DD} who in his/hers report wrote the following: ``It should be  possible to prove this result
for infinite dimension cocycle for Sacker-Sell spectrum  using the results in paper~\cite{C1}". It is fair to say that this paper wouldn't have existed without this gesture of extreme generosity.
I would also like to thank Luis Barreira and Claudia Valls for useful comments on the first version of this paper.

\bibliographystyle{amsplain}

\end{document}